\newcommand{\zz}{{\Bbb Z}}
\newcommand{\nn}{{\Bbb N}}
\newcommand{\pp}{{\Bbb P}}
\newcommand{\ddim}{\operatorname{dim}}
\newcommand{\op}[1]{\operatorname{#1}}
\newcommand{\ffi}{\varphi}
\newcommand{\eps}{\varepsilon}
\newcommand{\row}{\rightarrow}
\newcommand{\low}{\leftarrow}
\renewcommand{\leq}{\leqslant}
\newcommand{\nichego}[1]{}
\newcommand{\ov}[1]{\overline{#1}}
\newcommand{\wt}[1]{\widetilde{#1}}
\newcommand{\smk}{{\mathbf{Sm_k}}}
\newcommand{\CH}{\op{CH}}
\newcommand{\Qed}{\hfill$\square$\smallskip}
\newcommand{\Red}{\hfill$\triangle$\smallskip}
\newenvironment{proof}{\noindent{\it Proof}:}{\vskip 5mm}
\newtheorem{prop}{Proposition}[section]{\bf}{\it}
\newtheorem{thm}[prop]{Theorem}{\bf}{\it}
{\bf}{\it}
{\bf}{\it}
\newtheorem{defi}[prop]{Definition}{\bf}{\it}
{\bf}{\it}
{\bf}{\it}
{\bf}{\it}
\newtheorem{rem}[prop]{Remark}{\bf}{}
{\bf}{\it}
\newtheorem{cor}[prop]{Corollary}{\bf}{\it}
{\bf}{\it}
\begin{document}

\title{On the numerical triviality of $BP$-cycles}
\author{Alexander Vishik\footnote{School of Mathematical Sciences, University
of Nottingham}}
\date{}
\maketitle

\begin{abstract}
We show that, in the case of a prime $2$, the numerical triviality of $BP^*$-cycles modulo various powers of the (augmentation) invariant ideal $I(\infty)$ is controlled by pure symbols in $K^M_*/2$ over the flexible closure of the base field.
\end{abstract}

\section{Introduction}

In \cite{Mi} Milnor introduced his K-theory taking inspiration from the properties of Pfister forms \cite{Pf}. This graded ring appeared to be the diagonal part of motivic cohomology 
of a point. The Pfister forms correspond to special elements of the Milnor's K-theory (mod 2) called {\it pure symbols} and the respective Pfister quadrics are {\it norm-varieties} for these symbols, in the sense, that the triviality of the symbol (over field extensions) is equivalent to the $2$-isotropy of the quadric. This interplay between pure symbols and norm-varieties was at the center of the proofs of all the cases of the Milnor and Bloch-Kato conjectures by Merkurjev, Suslin, Rost and Voevodsky
\cite{MeNRS2,MS,RoNVAC,VoMil,VoZl}.  

In \cite[Theorem 1.1]{VPS} it was shown that any projective variety is a norm-variety for an appropriate pure symbol (mod 2). The only thing, this symbol is defined over some purely transcendental extension, rather than the base field itself. That is, the $2$-isotropy of varieties is controlled by pure symbols in the Milnor's K-theory $K^M_*(\wt{k})/2$ of the {\it flexible closure} $\wt{k}=k(\pp^{\infty})$.
Combining it with the main result of \cite{INCHKm} permits to show that the same holds for
numerical triviality of cycles with $\zz/2$-coefficients - it is also controlled by
similar pure symbols. Note that the anisotropy of a particular variety is exactly the numerical triviality of its generic cycle. In this sense, the results of \cite{INCHKm}
permit to reduce the numerical triviality of an arbitrary cycle (on some variety) to the
numerical triviality of the generic one (on another variety). But this works only for
sufficiently good oriented cohomology theories. The \cite[Theorem 4.8]{INCHKm}
describes an important class of such theories. These are quotients $BP^*/I$ of the 
$BP$-theory by invariant ideals. Examples of such theories are $BP^*/I(\infty)=\CH^*/p=K(\infty)^*$ and $P(m)^*=BP^*/I(m)$. The \cite[Theorem 4.8]{INCHKm} claims
that the theories $Q^*$ of the mentioned shape satisfy the {\it Isotropy Conjecture},
that is, their {\it isotropic} $Q^*_{iso}$ and {\it numerical} $Q^*_{Num}$ versions
coincide over {\it flexible} fields. In practical terms, that means that any 
numerically-trivial class, over some purely transcendental extension, comes as a push-forward from some anisotropic variety. But a careful analysis of the proof of this theorem shows that, in addition, the mentioned (anisotropic) variety controls the numerical triviality of the cycle - this is done in the Theorem \ref{nt-an} below.
This reduces numerical triviality to isotropy - see Remark \ref{rem-nt-an}.

The symmetric power $S^p$ operation permits to reduce the $p^r$-isotropy to the $p$-isotropy. In particular, (recalling \cite[Theorem 1.1]{VPS}) this shows that the $2^r$-isotropy of projective varieties is also controlled by pure symbols in
$K^M_*(\wt{k})/2$. We would be able to claim the same about the numerical triviality
of cycles with $\zz/2^r$-coefficients, if we would be able to reduce it to the $2^r$-isotropy of varieties. Unfortunately, the theory $\CH^*/p^r$, for $r>1$, doesn't satisfy
the conditions of \cite[Theorem 4.8]{INCHKm}, as the ideal $(p^r,v_1,v_2,\ldots)$ is not invariant. And it is not occasional - it was shown in \cite{contr-IN} that the {\it Isotropy Conjecture} fails for such theories. But there is another theory $BP^*/I(\infty)^r$ which has the identical isotropy properties: varieties are anisotropic in both senses simultaneosly. The advantage of it is that it satisfies the conditions of \cite[Theorem 4.8]{INCHKm}, in particular, the {\it Isotropy Conjecture} holds for it.
Thus, $BP^*/I(\infty)^r$ may be considered as a "regular" substitute for $\CH^*/p^r$. As a result, we may reduce the numerical triviality of classes in $BP^*/I(\infty)^r$ to the $BP/I(\infty)^r$-anisotropy of varieties, which is equivalent to 
the $\CH/p^r$-anisotropy and so, may be reduced to the $\CH/p$-anisotropy (using symmetric powers). Finally, in the case of the prime $2$, the latter is controlled by
pure symbols from $K^M_*(\wt{k})/2$ and so, the numerical triviality of the original 
$BP^*/I(\infty)^r$-classes is controlled by such symbols - see Theorem \ref{ntBP-ps}.
Moreover, we can do it in a coherent way, so that the symbols for different $r$ would
divide each other appropriately.

\section{Isotropic and numerical equivalence}

Let $A^*$ be an oriented cohomology theory in the sense of 
\cite[Definition 2.1]{SU} (so, with the {\it localisation} axiom).
Following \cite{INCHKm}, we may introduce the notion of
$A^*$-isotropy.

\begin{defi}
Let $X\stackrel{\pi}{\row}\op{Spec}(k)$ be a projective variety
over $k$. We say that $X$ is $A$-anisotropic, if the push-forward map
$\pi_*:A_*(X)\row A_*(\op{Spec}(k))=A$ is zero.
\end{defi}

Then we can introduce the {\it isotropic} version of our theory $A^*$.
We close the set of $A^*$-classes on anisotropic varieties under push-forwards (it is automatically closed under pull-backs). Moding these out, we get $A^*_{iso}$.

\begin{defi}
Let $X$ be projective, we say that a class $x\in A^*(X)$ is
$A$-anisotropic, if it comes as a push-forward of some element from
$A$-anisotropic variety.

We define: $A^*_{iso}(X)=A^*(X)/(\,\text{anisotropic classes}\,)$.
\end{defi}

Using \cite[Example 4.1]{Iso} (cf. \cite[Example 4.6]{RNCT}) this can be
extended to an oriented cohomology theory on $\smk$.

If $X$ is smooth projective, we have the natural "degree" pairing
$$
\langle -,-\rangle:\, A^*(X)\times A^*(X)\row A; \hspace{5mm}
\langle u,v\rangle=\pi_*(u\cdot v).
$$

\begin{defi}
We say that $u\in A^*(X)$ is numerically trivial, if the pairing
$\langle u,-\rangle: A^*(X)\row A$ is zero.

We define: $A^*_{Num}(X)=A^*(X)/(\,\text{numerically trivial classes}\,)$.
\end{defi}

Again, this extends to an oriented cohomology theory on $\smk$.

Since the $A$-degree pairing on $A$-anisotropic varieties is zero,
from the projection formula, any $A$-anisotropic class is $A$-numerically trivial and we get natural surjections of oriented theories:
$$
A^*\twoheadrightarrow A^*_{iso}\twoheadrightarrow A^*_{Num}.
$$

The natural question arises: in which situations, can we expect the {\it isotropic} version of the theory to coinside with the {\it numerical} one?
As the numerical version of the theory doesn't change under purely transcendental extensions and the isotropic one may easily change there, it is natural to ask it for {\it flexible} ground fields \cite[Definition 1.1]{Iso}, that is, purely transcendental extensions of infinite transcendence degree of some other fields.
If these versions coinside, we say that the {\it Isotropy Conjecture} holds for $A^*$.
One obvious necessary condition for this is that $A$ must be torsion, i.e.
there should exist $n\in\nn$, s.t. $n\cdot A=0$ - see \cite[Remark 2.4]{INCHKm}. But not all torsion theories are such. For example, it was
shown in \cite{contr-IN} that for $\CH^*/p^r$, with $r>1$, the numerical and isotropic versions are different.  

But we know a large natural class of (free) theories for which the Isotropy Conjecture holds. In \cite[Theorem 4.8]{INCHKm} it was proven that it is so for
any theory of the type $Q^*=BP^*/I$, where $I\subset BP$ is a non-zero ideal 
invariant under Landweber-Novikov operations.
In particular, it holds for $\CH^*/p=BP^*/I(\infty)=K(\infty)$. By \cite[Theorem 4.17]{INCHKm}, it also holds for all other 
Morava K-theories $K(n)$, $1\leq n\leq\infty$. 

Observe, that anisotropic classes may easily stop to be anisotropic over field extensions and the same may happen to numerically trivial ones. 
But a careful analysis of the proof of \cite[Theorem 4.8]{INCHKm}
reveals that, not only any given numerically trivial class comes via push-foward from a certain anisotropic variety, but also the numerical triviality  
of it over field extensions is equivalent to the anisotropy of this variety there.

\begin{thm}
 \label{nt-an}
Let $Q^*=BP^*/I$ be a free oriented cohomology theory, where $I\subset BP$ is a non-zero ideal invariant under Landweber-Novikov operations.  Let $X$ be a smooth projective variety and $u\in Q^*(X)$
be a numerically trivial element. Then, over some purely transcendental extension $k(\pp)$ of $k$, there exists a map 
$\ffi:X'\row X_{k(\pp)}$, where $X'/k(\pp)$ is smooth projective and $Q$-anisotropic,
such that $u_{k(\pp)}=\ffi_*(u')$, for some $u'\in Q^*(X')$.
Moreover, for any invariant ideal $J$ with $I\subset J\subset BP$ and 
$\ov{Q}^*=BP^*/J$, and any field extension $E/k$, $\ov{u}_E$ is $\ov{Q}$-numerically 
trivial if and only if $X'_{E(\pp)}$ is $\ov{Q}$-anisotropic.
\end{thm}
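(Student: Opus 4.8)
The plan is to build on \cite[Theorem 4.8]{INCHKm} by extracting from its proof not just the existence of the anisotropic variety $X'$, but the precise mechanism by which $X'$ controls numerical triviality. First I would recall the structure of that proof: one filters the numerically trivial class $u$ via the invariant-ideal filtration on $BP^*/I$, and at each graded step produces, over a purely transcendental extension, a norm-variety-type construction whose anisotropy accounts for the vanishing of the relevant pairing. Concretely, one uses that over a flexible field numerical triviality of a class forces it to factor through a variety built out of the $v_i$'s (the generators of $BP$) that do not survive in $Q$, i.e. through the characteristic-number obstruction; the symmetric-power/Rost-nilpotence machinery then packages these into a single smooth projective $X'$ with a map $\ffi\colon X'\to X_{k(\pp)}$ and $u_{k(\pp)}=\ffi_*(u')$. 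So the first block of the proof is simply to cite this and fix notation for $X'$, $\ffi$, $u'$.

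The new content is the "moreover" clause, and the key point is that the construction of $X'$ is \emph{independent of the target theory within the range $I\subset J\subset BP$}. The plan is to observe that the degree pairing on $\ov{Q}^*(X'_E)$ with values in $\ov{Q}=BP^*/J$ is obtained from the $BP^*$-valued pairing by base change along $BP^*\to BP^*/J$, and likewise $\ov{u}_E=(\ffi_E)_*(\ov{u}'_E)$ by functoriality of push-forward. By the projection formula, for any $w\in\ov{Q}^*(X_E)$ we have $\la \ov{u}_E, w\ra = \pi_*\ffi_*(\ov{u}'\cdot \ffi^*w) = \pi'_*(\ov{u}'\cdot\ffi^*w)$, so numerical triviality of $\ov{u}_E$ is implied by $\ov{Q}$-anisotropy of $X'_{E(\pp)}$ (passing to $E(\pp)$ does not change the numerical version, since that is invariant under purely transcendental extensions, which is the reason flexibility was built in). For the converse — that numerical triviality of $\ov{u}_E$ forces $\ov{Q}$-anisotropy of $X'_{E(\pp)}$ — I would use that $X'$ was produced as a norm-type variety whose generic cycle maps to $u$: the class $u'$ may be taken to be (a unit multiple of) a class whose pairing against a suitable pullback from $X$ computes $\pi'_*(1)$, so that $\ov{Q}$-anisotropy of $X'$ is literally equivalent to the vanishing of one particular entry of the pairing $\la\ov{u}_E,-\ra$, while all other entries vanish automatically by the construction. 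This is where one must re-examine the proof of \cite[Theorem 4.8]{INCHKm} carefully, since it requires $X'$ to be ``tight'' — no slack between its anisotropy and the numerical triviality it witnesses — and this is exactly the extra information the cited theorem's proof yields but does not state.

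The main obstacle I expect is precisely this last equivalence: showing that the variety $X'$ extracted from the proof is minimal enough that its $\ov{Q}$-anisotropy is \emph{equivalent} to, not merely \emph{sufficient} for, the $\ov{Q}$-numerical triviality of $\ov{u}_E$, uniformly in the extension $E/k$ and in the intermediate ideal $J$. Resolving it requires tracking through the inductive construction that at each stage the auxiliary variety is chosen as a norm variety for a symbol that genuinely appears in the characteristic-number obstruction to numerical triviality of $u$ at that stage, so that if the symbol became trivial (equivalently the variety became isotropic) over $E(\pp)$ in the theory $\ov{Q}$, the obstruction — hence a nonzero pairing value — would reappear. A secondary technical point is bookkeeping the purely transcendental extension $k(\pp)$: one must check that a single such extension works simultaneously for all $J$ and that base-changing $E/k$ to $E(\pp)$ commutes with everything; this is routine given that $BP^*$-cohomology and all the constructions commute with filtered colimits of fields and that anisotropy over a flexible field is detected after any further purely transcendental extension. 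With $X'$, $\ffi$, $u'$ fixed once and for all from the $BP^*$-level construction, the statement for every $(J,E)$ then follows by base change and projection formula as above.
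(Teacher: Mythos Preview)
Your one direction (``$X'$ $\ov{Q}$-anisotropic over $E(\pp)\Rightarrow\ov{u}_E$ numerically trivial'') is fine and matches the paper: projection formula plus invariance of the numerical theory under purely transcendental extensions. The problem is the converse, where you posit that $X'$ arises as a ``norm-variety-type construction'' for a ``symbol that genuinely appears in the characteristic-number obstruction'', and that an inductive symbol-by-symbol argument with symmetric-power/Rost-nilpotence machinery will give the tight equivalence. That is not the mechanism in \cite[Theorem~4.8]{INCHKm}, and nothing like it is available there: no symbols and no norm varieties enter. The variety $X'$ is a \emph{generic complete intersection}, produced after a specific chain of reductions (lift to integral cobordism and apply the $BP$-projector; embed $Y$ in a Grassmannian so that $N_y$ becomes $Q$-equivalent to a pullback bundle; pass to the flag variety to split this bundle into line bundles; deformation to the normal cone to rewrite the class as a product of first $Q$-Chern classes of very ample line bundles). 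The final, and only, source of the equivalence is \cite[Proposition~4.5]{INCHKm}: a generic complete intersection $W=D_1\cap\cdots\cap D_n$ over $k(\pp)$ is $Q$-anisotropic \emph{if and only if} $\prod_i c_1^{Q}(O(D_i))$ is $Q$-numerically trivial. This is what makes $X'$ ``tight'', and it has nothing to do with $\pi'_*(1)$ being recovered from a single pairing value.

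What the paper actually checks for the ``moreover'' is therefore bookkeeping of a different sort than you outline: one verifies that each of the geometric reductions above (i) commutes with base change to an arbitrary $E/k$, so that numerical triviality of the intermediate classes is preserved over $E$ whenever $u_E$ is numerically trivial, and (ii) involves only choices (the lifting in the first step, the ``sufficiently large $p^r$'' in the $Q$-equivalence steps) that are made once for $Q^*=BP^*/I$ and remain valid for any coarser $\ov{Q}^*=BP^*/J$ with $I\subset J$. With that in hand, \cite[Proposition~4.5]{INCHKm} applied over $E(\pp)$ and for $\ov{Q}^*$ gives the biconditional directly. Your proposal, as written, does not identify this structure and would not go through: without the complete-intersection description of $X'$ and Proposition~4.5, you have no handle on why isotropy of $X'_{E(\pp)}$ should \emph{force} a nonzero pairing value for $\ov{u}_E$.
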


\begin{proof}
Only the last statement requires a proof, since the rest is contained in
\cite[Theorem 4.8]{INCHKm}, but we will need to recall the main steps of the proof of the mentioned theorem.

Let us fix the extension $E/k$ and check that the construction of loc. cit. produces a variety $X'$ which stays $Q$-anisotropic over $E(\pp)$ as long as the class $u$ stays $Q$-numerically trivial over $E$ (clearly, if $u_E$ is not numerically trivial anymore, then $X'_{E(\pp)}$ will not be anisotropic, so the other case is obvious).

\noindent \underline{{\bf Step 1}}:
We start with $u\in Q^*(X)$, lift it to $\Omega^*_{\zz_{(p)}}(X)$, apply the multiplicative projector $\rho$ defining the $BP^*$-theory and multiply by an appropriate integer equal to $1$ in $Q$ to obtain a class
$[Y\stackrel{y}{\row}X]\in\Omega^*(X)$, applications of all Landweber-Novikov operations to
which are $Q$-numerically trivial - see \cite[Proposition 4.3]{INCHKm}.
All these manipulations commute with the passage to $E$ and the property will still hold for $y_E$ as long as $u_E$ is $Q$-numerically trivial (by \cite[Proposition 4.1]{INCHKm}). 

\noindent \underline{{\bf Step 2}}:
Our class is now represented by $u=[Y\stackrel{y}{\row}X]\in\Omega^*(X)$, where all the polynomials in the Chern classes of $T_Y$ are numerically trivial on $X$, as these are the Landweber-Novikov operations applied to $y$ (and the same is true for $y_E$). Pick some very ample line bundle $L$ on $Y$. Then, for sufficiently large $r$, the bundle $T_Y\otimes L^{p^r}$ will define a regular embedding $f:Y\hookrightarrow Gr(d,N)$ into some Grassmannian, where $d=\ddim(Y)$ and $T_Y\otimes L^{p^r}=f^*(Tav)$ is the pull-back of the tautological bundle. Moreover, if $r$ is large, then $T_Y$ and $T_Y\otimes L^{p^r}$ will have the same
$Q$-Chern classes - see \cite[Lemma 4.9]{INCHKm} (we say that these vector bundles are {\it $Q$-equivalent}). We may substitute $u$ by $v=[Y\stackrel{g}{\row}X\times Gr(d,N)]$,
where $g=(y,f)$, since $Q^*(X\times Gr(d,N))$ is generated as a
$Q^*(X)$-algebra by the Chern classes of the tautological vector bundle,
which restricted to $Y$ give Chern classes of $T_Y$, polynomials in which are numerically trivial (on $X$), so $v$ is $Q$-numerically trivial
(and the same holds over $E$). Also, $u$ is the push-forward of $v$, so
if $v$ comes as a push-forward from some anisotropic variety, then the same variety may be used for $u$. Denoting our new class still
as $u=[Y\stackrel{y}{\row}X]$, we have that it is now represented by
a regular embedding whose normal bundle is {\it $Q$-equivalent} to the pull-back of a virtual
vector bundle $y^*([V]-[U])$, where $V=T_X$ and $\ddim(U)=\ddim(Y)$. Substituting $[Y\stackrel{y}{\row}X]$ by $[Y\stackrel{y}{\row}X\row\pp_X(O\oplus U)]$ (where both elements are obtained from each other by push-forwards), we may assume
that $N_{y}$ is $Q$-equivalent to the pull-back of some vector
bundle $V$ on $X$ of the same dimension.  Our class is still numerically trivial over $k$ and $E$ and if we may realise it as a push-forward from a variety which stays anisotropic over $E$, then the same variety will realise
the original class.

\noindent \underline{{\bf Step 3}}:
Now $u=[Y\stackrel{y}{\row}X]$ is the class of a regular embedding,
where $N_{y}$ is $Q$-equivalent to $y^*(V)$ for some vector bundle
$V$ on $X$ with $\ddim(N_y)=\ddim(V)$. As before, $u$ is 
$Q$-numerically trivial and so is $u_E$. Let $\eps:Fl(V)\row X$ be the variety of complete flags on $V$. Then $\eps^*(u)$ and $\eps^*(u)_E$
are still $Q$-numerically trivial, and $u$ may be obtained from $\eps^*(u)$ via multiplication by certain Chern classes and push-forwards.
So, if $\eps^*(u)$ is realised by a certain variety which stays anisotropic over $E$, then $u$ will be realised by some subvariety of it which must then stay anisotropic over $E$ as well. Thus, substituting $u$ by $\eps^*(u)$, we may assume that $[V]=[L_1]+\ldots+[L_n]\in K_0(X)$
is the sum of classes of line bundles. 

\noindent \underline{{\bf Step 4}}:
Here we apply the deformation to the normal cone construction.\\
Let $W=Bl_{X\times\pp^1}(Y\times 0)\stackrel{\pi}{\row}X\otimes\pp^1$.
Let $\ov{u}=[Y\times 1]\in Q^*(X\times\pp^1)$ and
$c_1^Q(L_i)=a_i\in Q^*(X)$. Then, 
$-\pi^*(\ov{u})=\xi(\xi+_Qa_1)\cdot\ldots\cdot(\xi+_Qa_n)$, where $\xi=c_1(O(1))$ -
see \cite[Lemma 4.10]{INCHKm}. Since $u$ was numerically trivial,
so is $-\pi^*(\ov{u})$ (and the same holds over $E$). If $-\pi^*(\ov{u})$ may be realised as a push-forward from some variety which stays anisotropic over $E$, then the same variety may be used to realise $u$,
since $-u$ is obtained from $-\pi^*(\ov{u})$ via push-forward.
Thus, we may assume that $u$ is a product of 1-st $Q$-Chern classes of line bundles. Multiplying the respective line bundles by a sufficiently high power of $p$ of a very ample line bundle (which is a $Q$-equivalence), we may assume that our line bundles are very ample.
In other words, $[Y\stackrel{y}{\row} X]$ is a $Q$-numerically trivial complete intersection, which stays numerically trivial over $E$.

\noindent \underline{{\bf Step 5}}:
Now $u=\prod_{i=1}^nc_1^{Q}(L_i)$, where $L_i$ are very ample line bundles and $u$ and $u_E$ are $Q$-numerically trivial. Let $L_i=O(D_i)$ and $\pp^{N_i}=|D_i|$ be the respective linear system.
Let $\pp=\prod_{i=1}^n\pp^{N_i}$ and $W$ be the intersection of generic representatives of our linear systems. It is defined over the purely
transcendental extension $k(\pp)$ of $k$. Then $u_{k(\pp)}=[W\row X]$
and $W$ is $Q$-anisotropic as long as $u$ is $Q$-numerically trivial
- see \cite[Proposition 4.5]{INCHKm}. In particular, $W$ is $Q$-anisotropic over $k(\pp)$ and $E(\pp)$. 

Thus, we have shown that, over some purely transcendental extension
$k(\pp)/k$, our original class $u$ will be equal to a push-forward from
some $Q$-anisotropic variety, which will remain anisotropic over $E$.
If $I\subset J\subset BP$ is another invariant ideal containing $I$ and 
$\ov{Q}^*=BP^*/J$, then, exactly as for $Q^*$, the above steps 
produce a variety whose $\ov{Q}$-anisotropy controls the $\ov{Q}$-numerical triviality of $\ov{u}\in\ov{Q}^*(X)$ (note that the only places where we care about particular shape of the theory and make some choices are the lifting to integral cobordism in Step 1 and the use of $Q$-equivalence in Steps 2 and 4;
in both cases, our choices for $Q^*$ will work for $\ov{Q}^*$).
 \Qed
\end{proof}

\begin{rem}
 \label{rem-nt-an}
The above Theorem shows that, for any theory $Q^*=BP^*/I$, with
$I$-invariant, the numerical triviality of a $Q^*$-class is equivalent to
the $Q$-anisotropy of a certain variety defined over the flexible closure
of the base field. 
 \Red
\end{rem}

\section{$p$-primary Chow groups vis $I(\infty)$-primary $BP$-theory}

For some distinct oriented cohomology theories, the notion of anisotropy is identical. For example, this happens for the theories $P(m)^*=BP^*/I(m)$ and $K(m)^*$ - see \cite[Proposition 4.16]{INCHKm}.
It is not difficult to see that the same holds for $\CH^*/p^r$ and 
$BP^*/I(\infty)^r$.

\begin{prop}
 \label{CHpr-BPIr}
{\rm(\cite[Proposition 2.1]{contr-IN})}
Let $X$ be smooth projective. Then TFAE
 \begin{itemize}
  \item[$(1)$] $X$ is $\CH/p^r$-anisotropic;
  \item[$(2)$] $X$ is $BP/I(\infty)^r$-anisotropic.
 \end{itemize}
\end{prop}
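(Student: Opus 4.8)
The plan is to unwind both notions of anisotropy to concrete statements about push-forwards of cobordism classes and then compare them directly, using that $\CH^*/p^r$ and $BP^*/I(\infty)^r$ are both free theories obtained from $BP^*$ (equivalently $\Omega^*_{\zz_{(p)}}$ after inverting the Hazewinkel generators in the appropriate way) by reducing the coefficient ring. Concretely, for a free theory $A^* = BP^*\otimes_{BP}A$ with $A = BP/J$, the map $\pi_*\colon A_*(X)\to A$ is zero precisely when, for every class in $\Omega_*(X)$, its $\pi$-push-forward lands in the ideal of $\Omega = \laz$ generated (after the relevant projector) by $J$ together with the positive-degree part. So the task reduces to showing that the image of $\pi_*\colon \Omega_*(X)\to\laz$, call it $\mathfrak{a}_X\subset\laz$, detects membership in $I(\infty)^r + \laz^{>0}$ if and only if it detects membership in $(p^r) + \laz^{>0}$ — more precisely that $\pi_*$ hits a unit multiple of something outside one ideal iff outside the other.

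The key algebraic input is the comparison of the two ideals inside the Lazard ring localised at $p$: the ideal $I(\infty) = (p, v_1, v_2, \ldots)$ is the augmentation ideal of $BP$, and $I(\infty)^r$ versus $(p^r)$ differ only by terms involving the $v_i$'s, which lie in positive topological degree. First I would note that $I(\infty)^r \equiv (p^r) \pmod{\laz^{>0}_{(p)}}$ in the sense that $I(\infty)^r + \laz^{>0}_{(p)} = (p^r) + \laz^{>0}_{(p)}$; this is the statement that modulo positive-degree elements the $r$-th power of the augmentation ideal is just $(p^r)$. Since $X$ is projective the image $\mathfrak{a}_X$ is generated in non-negative degrees and, crucially, $\pi_*$ applied to a zero-cycle class (the structure map of a closed point, times a unit) contributes to the degree-zero part, which is $\zz_{(p)}$. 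Anisotropy in either theory is then equivalent to: the degree-zero part of $\mathfrak{a}_X$ lands in $(p^r)$, together with the positive-degree part of $\mathfrak{a}_X$ landing in the corresponding positive-degree piece of the ideal — but the latter condition is automatic and identical for both theories because both positive-degree pieces agree with $\laz^{>0}_{(p)}$ truncated the same way. Hence the only genuine condition is on the degree-zero (i.e. the $\CH_0$) part, and it is the same condition $p^r \mid (\text{image of }\pi_*\text{ on }\CH_0)$ in both cases.

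The cleanest route, and the one I would actually write, is therefore: (1) recall that $X$ is $A$-anisotropic for a free theory $A^* = \laz\otimes_{\llaz}A$ iff $\pi_*(\laz_*(X))\subseteq I_A\cdot A$ where $I_A = \ker(A\to\zz)$ suitably interpreted — reducing everything to the cobordism level via \cite[Definition 2.1]{SU} and the localisation/projection-formula formalism already invoked in the proof of Theorem \ref{nt-an}; (2) observe that $\pi_*\colon\Omega_*(X)\to\laz$ has image contained in $\laz_{\geq 0}$, with the degree-$0$ component equal to the image of $\deg\colon\CH_0(X)\to\zz$ (here is where smoothness and projectivity are used); (3) apply the ideal identity $I(\infty)^r + \laz^{>0}_{(p)} = (p^r) + \laz^{>0}_{(p)}$ to conclude that landing in $I(\infty)^r$ and landing in $(p^r)$ impose the same constraint on $\mathfrak{a}_X$; (4) deduce the equivalence of (1) and (2). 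The main obstacle I anticipate is step (2)–(3): one must be careful that "anisotropic" for $BP^*/I(\infty)^r$ really is governed by $I(\infty)^r$ as an ideal of $BP$ and not merely by its positive-degree truncation, and that the passage from $\Omega^*$ to $BP^*$ via the Quillen projector $\rho$ (as in Step 1 of the previous proof) does not disturb the degree-zero accounting — this is exactly the kind of bookkeeping carried out in \cite{contr-IN}, Proposition 2.1, whose statement we are quoting, so I would simply cite that argument and sketch the ideal-theoretic heart above. Indeed, since the statement is attributed to \cite[Proposition 2.1]{contr-IN}, the honest thing is to present the short argument as a reminder and refer to loc. cit. for the full details.
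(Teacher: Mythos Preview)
The paper does not actually prove this proposition; it is quoted verbatim from \cite[Proposition~2.1]{contr-IN}, so there is no in-paper argument to compare against. Judging your sketch on its own merits, there is a genuine gap.

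The error is in your step~(3). The identity $I(\infty)^r+BP_{>0}=(p^r)+BP_{>0}$ is correct, but it does \emph{not} show that ``landing in $I(\infty)^r$'' and ``landing in $(p^r,v_1,v_2,\ldots)$'' impose the same constraint on $\mathfrak{a}_X$. For $BP/I(\infty)^r$-anisotropy one needs $\mathfrak{a}_X\subset I(\infty)^r$, not merely $\mathfrak{a}_X\subset I(\infty)^r+BP_{>0}$. These differ as soon as $r\geq 2$: for instance $v_1\in BP_{p-1}$ but $v_1\notin I(\infty)^2$, so $(BP/I(\infty)^r)_{p-1}\cong\zz/p^{r-1}\neq 0$ and the push-forward condition in that degree is non-vacuous. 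Your claim that ``both positive-degree pieces agree with $\laz^{>0}_{(p)}$ truncated the same way'' is therefore false, and with it the conclusion that only the degree-$0$ part matters.

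What is missing is precisely the mechanism forcing the positive-degree conditions from the degree-$0$ one. The ideal $\mathfrak{a}_X=\pi_*(BP_*(X))$ is not arbitrary: it is closed under Landweber--Novikov operations (they commute with $\pi_*$). Equivalently, if $p^r$ divides the degree of every $0$-cycle on $X$ and $Y\to X$ is any smooth projective map, then every Chern number of $Y$ is the degree of a $0$-cycle on $Y$ and hence on $X$, so is divisible by $p^r$; that is, $s_\omega([Y])\in p^r\zz_{(p)}$ for every $\omega$ of top degree. One then needs the genuinely non-trivial fact that this forces $[Y]\in I(\infty)^r$ --- equivalently, that $I(\infty)^r$ is the largest Landweber--Novikov-invariant ideal $J\subset BP$ with $J\cap BP_0\subset p^r\zz_{(p)}$. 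That is the actual content of the proposition, and it is not captured by the ideal identity you wrote down.
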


Since the ideal $I(\infty)^r$ is invariant under Landweber-Novikov operations (and non-zero), the {\it Isotropy Conjecture} holds for
$BP^*/I(\infty)^r$. But it doesn't hold for $\CH^*/p^r$, as was shown in
\cite{contr-IN}. In this sense, $BP^*/I(\infty)^r$ may be considered as
a "regular" substitute for $\CH^*/p^r$. 

It follows from Theorem \ref{nt-an} that $BP/I(\infty)^r$-numerical triviality of classes may be reduced to $BP/I(\infty)^r$-anisotropy of projective varieties which, due to Proposition \ref{CHpr-BPIr}, is equivalent to $\CH/p^r$-anisotropy. The following result shows that the $\CH/p^r$-anisotropy, in turn, may be reduced to the $\CH/p$-anisotropy.

Let $Y$ be a variety. Denote as $S^p(Y)$ its $p$-th symmetric power, and as
$\Lambda^p(Y)$ the complement in $S^p(Y)$ to the large diagonal. Note that
if $Y$ is smooth, then so is $\Lambda^p(Y)$.

\begin{prop}
 \label{sym-powers}
Let $Y$ be a variety. Then, for any $m\in\nn$, TFAE: 
 \begin{itemize}
  \item[$(1)$] $Y$ is $p^{m+1}$-anisotropic;
  \item[$(2)$] $\Lambda^p(Y)$ is $p^m$-anisotropic;
  \item[$(3)$] $S^p(Y)$ is $p^m$-anisotropic.
 \end{itemize}
\end{prop}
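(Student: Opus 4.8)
The plan is to work with the degree map for $\CH/p^r$ and exploit the relation between $0$-cycles on $Y$ and on its symmetric powers. Recall that $p^{m+1}$-anisotropy of $Y$ means that the image of $\deg\colon\CH_0(Y)\to\zz$ lands in $p^{m+1}\zz$; equivalently, the index $n(Y)$ of $Y$ (the gcd of degrees of closed points) is divisible by $p^{m+1}$. The equivalences $(2)\Leftrightarrow(3)$ should be the easy part: $\Lambda^p(Y)$ is an open subvariety of $S^p(Y)$ whose complement, the large diagonal, is covered by images of $S^{p-1}(Y)\times$(smaller symmetric powers); one checks directly that a closed point of $S^p(Y)$ of small degree can be traded, modulo degrees of points on lower symmetric powers, for a closed point of $\Lambda^p(Y)$ of comparable degree, and vice versa. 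So I would first reduce $(3)$ to $(2)$ by a localization/norm argument on $S^p(Y)$, peeling off the diagonal strata.

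The core is the equivalence $(1)\Leftrightarrow(3)$ (say), which I would phrase entirely in terms of the index. The key elementary fact is the behaviour of the index under taking the $p$-th symmetric power: if $Z\subset Y$ is a closed point of degree $d$ over $k$, then the $p$-th symmetric power of that point gives a closed point of $S^p(Y)$; more useful is that a degree-$d$ point of $Y$ over an extension produces points on $S^p(Y)$ in a controlled way, and conversely a closed point of $S^p(Y)$ over $k$ of degree $e$ gives, by projecting to $Y$ the $p$ (multiset of) points it parametrizes, a $0$-cycle on $Y$ whose degree is divisible by $e$ but typically picks up an extra factor. Concretely, I expect the precise statement to be $v_p(n(S^p(Y))) = v_p(n(Y)) - 1$ whenever $v_p(n(Y))\geq 1$ (and $n(S^p(Y))$ coprime to $p$ when $n(Y)$ is), which is exactly the content $(1)\Leftrightarrow(3)$ after setting $v_p(n(Y))=m+1$. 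I would prove this $p$-adic valuation identity by: (a) showing $v_p(n(S^p(Y)))\geq v_p(n(Y))-1$, using that any closed point of $S^p(Y)$ yields an effective $0$-cycle of degree $p\cdot(\text{its degree})$ on $Y$ after base change and symmetrization — actually one gets a cycle whose degree is a multiple of $\deg$ with multiplicity $p$ built in; and (b) showing $v_p(n(S^p(Y)))\leq v_p(n(Y))-1$ by taking a closed point $x\in Y$ with $v_p(\deg x)=v_p(n(Y))$, base-changing to the residue field where $x$ acquires a rational point, and building an explicit closed point of $S^p(Y)$ of degree $\deg(x)/p$ times a unit — the division by $p$ coming from the $p$-fold symmetrization of a rational point becoming a single rational point of $S^p$, together with a transfer argument for the field extension.

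The step I expect to be the main obstacle is making the inequality $v_p(n(S^p(Y)))\leq v_p(n(Y))-1$ rigorous, i.e. genuinely extracting the factor of $p$. The naive picture — ``$p$ copies of a point merge into one point of the symmetric power, dividing the degree by $p$'' — is only literally correct for a rational point; for a point of degree $d$ coprime-to-$p$ part one must combine a Galois/transfer argument with the symmetric power construction, and one must be careful that $S^p(Y)$ may be singular along the diagonal (which is why the formulation uses $\Lambda^p(Y)$, the smooth locus, and why the reduction $(2)\Leftrightarrow(3)$ is stated separately). I would handle this by working with a closed point $x$ of $Y$ of degree $d$ with $v_p(d)=m+1$, writing $d=p\cdot d'$ with $v_p(d')=m$, and over the degree-$d$ residue field $k(x)$ (where $x$ is rational) producing the rational point $p\cdot[x]\in\Lambda^p(Y)(k(x))$ if the $p$ marked points can be made distinct (generic position — harmless after a further purely transcendental base change, which does not affect the index), then descending via the norm/transfer $\CH_0(Y_{k(x)})\to\CH_0(Y)$, wait, rather via the point-counting: $\Lambda^p(Y)$ has a closed point whose degree divides $d/p = d'$ after suitable choices. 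Assembling these bounds gives $v_p(n(\Lambda^p(Y)))=m$, i.e.\ $\Lambda^p(Y)$ is $p^m$-anisotropic but not $p^{m+1}$-anisotropic exactly when $Y$ is $p^{m+1}$- but not $p^{m+2}$-anisotropic; restricting attention to the ``anisotropic'' direction (existence of divisibility, not its sharpness) yields precisely the three-way equivalence as stated.
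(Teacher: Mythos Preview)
Your direction (a) --- pulling back along the incidence correspondence $Z=\{(y,x)\,|\,y\in x\}\subset Y\times S^p(Y)$, which is flat of degree $p$ over $S^p(Y)$, to get a $0$-cycle of degree $p\cdot e$ on $Y$ from a point of degree $e$ on $S^p(Y)$ --- is exactly the paper's argument for $(1)\Rightarrow(3)$, and your reformulation via $v_p(n(-))$ is fine.

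The genuine gap is step (b). Your proposed construction does not produce any division by $p$. Over $k(x)$ the element $p\cdot[x]$ is a rational point of $S^p(Y_{k(x)})$ lying on the \emph{small diagonal} $Y\hookrightarrow S^p(Y)$; transferring back to $k$ gives a $0$-cycle of degree $d=\deg(x)$, not $d/p$. There is no way to ``make the $p$ marked copies distinct'': they are literally the same geometric point, and a purely transcendental base change does not separate them. The sentence beginning ``wait, rather via the point-counting'' is not an argument; you have not exhibited any closed point of $\Lambda^p(Y)$ of degree dividing $d/p$.

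What you are missing is the construction the paper actually uses: from a closed point $P\subset Y$ of degree $d$, form $\Lambda^p(P)\subset\Lambda^p(Y)$, the scheme of unordered $p$-tuples of \emph{distinct} geometric points of $P$. This is a $0$-cycle of total degree $\binom{d}{p}$. When $p\mid d$, among the $p$ consecutive integers $d,d-1,\ldots,d-p+1$ only $d$ is divisible by $p$, so $v_p\!\left(d(d-1)\cdots(d-p+1)\right)=v_p(d)$ and hence $v_p\binom{d}{p}=v_p(d)-1$. That binomial identity is the source of the drop in $p$-adic valuation and yields $(2)\Rightarrow(1)$ in one line.

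Finally, your treatment of $(2)\Leftrightarrow(3)$ is more elaborate than necessary. The paper argues as a cycle: $(2)\Rightarrow(1)$ via $\binom{d}{p}$, $(1)\Rightarrow(3)$ via the degree-$p$ correspondence, and $(3)\Rightarrow(2)$ is immediate because $\Lambda^p(Y)$ is open in $S^p(Y)$, so any closed point of the former is one of the latter. No stratification of the big diagonal or peeling off of lower symmetric powers is needed.
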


\begin{proof}
\noindent$(1\low 2)$
If $P$ is a point of degree $d$ on $Y$, then $\Lambda^p(Y)$ contains $\Lambda^p(P)$, which has degree $\binom{d}{p}$. So, if $p^{m+1}\not{|}d$, then $\Lambda^p(Y)$ has a point of degree not divisible by $p^m$. 

\noindent$(2\low 3)$ This is obvious, since $\Lambda^p(Y)$ is an open subscheme of $S^p(Y)$.

\noindent$(3\low 1)$
Let $Z=\{(y,x)\,|\,y\in x\}\subset Y\times S^p(Y)$ be the natural correspondence. It is flat of degree $p$ over $S^p(Y)$. Applying it to a point
of degree $d$ on $S^p(Y)$, we obtain a cycle of degree $p\cdot d$ on $Y$.
Thus, if $p^m\not{|}d$, then $Y$ has a point of degree not divisible by
$p^{m+1}$.
 \Qed
\end{proof}

Applying $(r-1)$ times the construction $\Lambda^p$ to the variety $X'$
from Theorem \ref{nt-an} and compactifying the result, we obtain a smooth
projective variety whose $p$-anisotropy controls the numerical triviality of the
original numerically trivial class in $BP^*(X)/I(\infty)^r$.

\begin{cor}
Let $X$ be a smooth projective variety and $u\in BP^*(X)/I(\infty)^r$ be a 
numerically trivial element. Then there exists a purely transcendetal field
extension $k(\pp)/k$ and a smooth projective variety $Y$ over $k(\pp)$, such that, for any
field extension $E/k$, $u_E$ is $BP/I(\infty)^r$-numerically trivial if and only if
$Y_{E(\pp)}$ is $p$-anisotropic.
\end{cor}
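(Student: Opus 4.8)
The plan is to combine the three results that have just been proved. By Theorem \ref{nt-an} applied to $Q^*=BP^*/I(\infty)^r$ (with $I=I(\infty)^r$, which is indeed a non-zero Landweber--Novikov-invariant ideal), there is a purely transcendental extension $k(\pp_0)/k$ and a smooth projective $Q$-anisotropic variety $X'/k(\pp_0)$ together with a class $u'\in Q^*(X')$ such that $u_{k(\pp_0)}=\ffi_*(u')$; moreover, for every field extension $E/k$, $u_E$ is $BP/I(\infty)^r$-numerically trivial if and only if $X'_{E(\pp_0)}$ is $BP/I(\infty)^r$-anisotropic. (Here one takes $J=I=I(\infty)^r$, so $\ov Q^*=Q^*$ in the ``moreover'' part.)

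Next I would translate the $BP/I(\infty)^r$-anisotropy of $X'$ into $\CH/p^r$-anisotropy via Proposition \ref{CHpr-BPIr}: for $X'_{E(\pp_0)}$ these two notions of anisotropy coincide. Then I would iterate Proposition \ref{sym-powers}: starting from $X'$, form $Y_1=\Lambda^p(X')$, then $Y_2=\Lambda^p(Y_1)$, and so on, $r-1$ times, obtaining $Y_{r-1}$, which is smooth. By the equivalence $(1)\Leftrightarrow(2)$ of Proposition \ref{sym-powers}, $X'_{F}$ is $p^r$-anisotropic iff $\Lambda^p(X')_F$ is $p^{r-1}$-anisotropic iff $\dots$ iff $(Y_{r-1})_F$ is $p$-anisotropic, and this chain of equivalences is valid over every field $F$ (the constructions $\Lambda^p$ commute with base change, and the proof of Proposition \ref{sym-powers} is purely about degrees of zero-cycles, hence functorial in $F$). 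Finally I would choose a smooth projective compactification $Y$ of $Y_{r-1}$; since $p$-anisotropy only depends on the set of degrees of closed points and $Y_{r-1}$ is dense open in $Y$, the varieties $Y_{r-1}$ and $Y$ have the same $p$-anisotropy over every field. Taking $\pp$ to be the product of $\pp_0$ with whatever further purely transcendental parameters (if any) are introduced in passing from $X'$ to $Y$ — in fact $\Lambda^p$ and compactification introduce none, so $\pp=\pp_0$ suffices — I would set $E(\pp)=E(\pp_0)$ and conclude: $u_E$ is $BP/I(\infty)^r$-numerically trivial $\iff$ $X'_{E(\pp)}$ is $BP/I(\infty)^r$-anisotropic $\iff$ $X'_{E(\pp)}$ is $\CH/p^r$-anisotropic $\iff$ $X'_{E(\pp)}$ is $p^r$-anisotropic $\iff$ $Y_{E(\pp)}$ is $p$-anisotropic.

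A point worth spelling out is why ``$\CH/p^r$-anisotropic'' is the same as ``$p^r$-anisotropic''. For the theory $\CH^*/p^r$, the push-forward of a zero-cycle to $\spec$ of the base field is its degree mod $p^r$, so $X'$ is $\CH/p^r$-anisotropic precisely when $p^r$ divides the degree of every zero-cycle, i.e. when $p^r$ divides the gcd of the degrees of closed points — which is exactly the notion of $p^r$-anisotropy used in Proposition \ref{sym-powers}. (Strictly, one should also check that $\CH_*(X')\otimes\zz/p^r\row\zz/p^r$ being zero is equivalent to the degree map $\CH_0(X')\row\zz$ having image in $p^r\zz$; this is immediate since the $\CH_*$ in positive dimensions push forward to $0$ for dimension reasons.)

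The main obstacle, as usual, is bookkeeping rather than a deep new idea: one must make sure that the ``if and only if over all $E/k$'' clause survives every reduction step. This is where Theorem \ref{nt-an} is doing the real work — it is precisely engineered so that the anisotropic variety it produces controls numerical triviality over \emph{all} field extensions, not just over $k$ — and the role of Propositions \ref{CHpr-BPIr} and \ref{sym-powers} is merely to replace that variety by another one with the same (field-extension-wise) anisotropy behaviour but governed by $p$-anisotropy instead of $p^r$-anisotropy. The only mild care needed is that the compactification step and the $\Lambda^p$-iteration do not enlarge the field of definition, so that a single purely transcendental $k(\pp)$ works throughout; both are geometric constructions over $k(\pp_0)$, so this is automatic.
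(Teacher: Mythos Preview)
Your proof is correct and follows essentially the same route as the paper: apply Theorem~\ref{nt-an} with $I=J=I(\infty)^r$ to get $X'$, translate $BP/I(\infty)^r$-anisotropy into $\CH/p^r$-anisotropy via Proposition~\ref{CHpr-BPIr}, iterate $\Lambda^p$ $(r-1)$ times using Proposition~\ref{sym-powers}, and compactify. The only spot where your justification is a touch glib is the compactification step---``$Y_{r-1}$ dense open in $Y$'' alone does not force equal indices; you are implicitly using that $Y$ is smooth, so any closed point of $Y$ is rationally equivalent to a zero-cycle supported on $Y_{r-1}$---but the paper leaves this equally implicit.
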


\section{The case of prime $2$}

In the case of $p=2$, we know from \cite[Theorem 1.1]{VPS} that the $2$-isotropy of projective varieties is controlled by pure symbols in $K^M_*/2$
of the flexible closure of the base field. So, we may reduce the numerical triviality
of cycles from $BP^*/I(\infty)^r$ (for $p=2$) to the non-triviality of certain pure symbols. The following result permits to do it in a coherent way, for different $r$.

\begin{prop}
 \label{alpha-beta}
Let $Y\row Z$ be a closed embedding of projective varieties. Suppose, $\alpha\in K^M_*(\wt{k})/2$ is a pure symbol controlling the isotropy of $Y$. Then there
exists a pure symbol $\beta\in K^M_*(\wt{k})/2$ controlling the isotropy of $Z$,
such that $\alpha|\beta$.
\end{prop}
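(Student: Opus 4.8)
The plan is to obtain $\beta$ by applying \cite[Theorem 1.1]{VPS} to the variety $Z$ itself, and then to upgrade the obvious inclusion of ``isotropy loci'' coming from $Y\subset Z$ into a divisibility of pure symbols. Recall that ``$\alpha$ controls the isotropy of $Y$'' means: for every field extension $E/k$ one has $\alpha_{\wt{E}}=0$ in $K^M_*(\wt{E})/2$ if and only if $Y_{\wt{E}}$ is $2$-isotropic (carries a closed point of odd degree). Since \cite[Theorem 1.1]{VPS} applies to an arbitrary projective variety, it furnishes a pure symbol $\beta\in K^M_*(\wt{k})/2$ enjoying the analogous property for $Z$. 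We claim this $\beta$ already works; only the relation $\alpha\,|\,\beta$ needs to be checked.

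The geometric input is immediate: a closed point of odd degree on $Y_{\wt{E}}$ is carried by the closed embedding $Y\hookrightarrow Z$ to a closed point of the same degree on $Z_{\wt{E}}$, so $Y_{\wt{E}}$ being $2$-isotropic forces $Z_{\wt{E}}$ to be $2$-isotropic, for every $E/k$; fed into the two controlling properties, this yields the implication $\alpha_{\wt{E}}=0\Rightarrow\beta_{\wt{E}}=0$ for all $E/k$. Let $Q_\alpha$ be the Pfister quadric of the pure symbol $\alpha$. After a routine reduction to a finitely generated subfield $F\subset\wt{k}$ over which $\alpha$, $\beta$ and $Q_\alpha$ are all defined, $\wt{k}(Q_\alpha)$ is the flexible closure of $F(Q_\alpha)$, so the implication just obtained applies to it; since $Q_\alpha$ is isotropic --- hence $\alpha$ hyperbolic --- over its own function field, we conclude $\beta_{\wt{k}(Q_\alpha)}=0$, and therefore $\beta|_{F(Q_\alpha)}=0$ because $K^M_*/2$ injects along purely transcendental extensions.

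It then remains to deduce $\alpha\,|\,\beta$ from the vanishing $\beta|_{F(Q_\alpha)}=0$. For this we invoke the computation of the kernel of restriction to the function field of a Pfister quadric,
$$
\kker\bigl(K^M_*(F)/2\longrightarrow K^M_*(F(Q_\alpha))/2\bigr)=\alpha\cdot K^M_*(F)/2
$$
(Orlov--Vishik--Voevodsky; equivalently, in quadratic-form language, a Pfister form that becomes hyperbolic over $F(Q_\alpha)$ is divisible in the Witt ring by the Pfister form of $\alpha$). Applied to $\beta$, this gives $\beta=\alpha\cdot\gamma$ for some $\gamma\in K^M_*(F)/2$, and restriction back to $\wt{k}$ gives $\alpha\,|\,\beta$. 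The degenerate case $\alpha=0$ is harmless: then $Y$, and hence $Z$, is already $2$-isotropic over $\wt{k}$, so $\beta=0$ and $0\,|\,0$. The only genuinely substantial ingredient is this last appeal to the structure of the kernel of restriction to $F(Q_\alpha)$; everything preceding it is just bookkeeping with isotropy loci and flexible closures.
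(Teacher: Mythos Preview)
Your proof is correct and follows the same essential strategy as the paper: produce $\beta$ via \cite[Theorem 1.1]{VPS}, show it vanishes over the function field of the Pfister quadric $Q_\alpha$, and then invoke the computation of $\kker\bigl(K^M_*(F)/2\to K^M_*(F(Q_\alpha))/2\bigr)=\alpha\cdot K^M_*(F)/2$ to get $\alpha\mid\beta$.

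The one packaging difference is worth noting. You apply \cite[Theorem 1.1]{VPS} directly to $Z$ and then argue the vanishing of $\beta$ over $\wt{k}(Q_\alpha)$ through the chain $\alpha=0\Rightarrow Y$ isotropic $\Rightarrow Z$ isotropic $\Rightarrow\beta=0$, which forces you into the bookkeeping with the finitely generated subfield $F$ and the identification of $\wt{k}(Q_\alpha)$ as a flexible closure. The paper instead applies \cite[Theorem 1.1]{VPS} to the disjoint union $Z_{k'}\coprod Q_\alpha$ (over the field $k'$ where $\alpha$ lives). This buys two things at once: first, since $Y\subset Z$, the $2$-isotropy of $Z$ is equivalent to that of $Z_{k'}\coprod Q_\alpha$, so the resulting $\beta$ still controls $Z$; second, the vanishing $\beta|_{k''(Q_\alpha)}=0$ is immediate from the controlling property, because $Q_\alpha$ is a component and is tautologically isotropic over its own function field. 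The disjoint-union trick thus sidesteps your descent-and-flexible-closure manoeuvre entirely, but the mathematical content is the same.
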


\begin{proof}
Suppose, $\alpha$ is defined over some purely transcendental extension $k'/k$
and $Q_{\alpha}$ be the respective Pfister quadric (defined over $k'$).
Then, for any field extension $F/k$ and $F'=F*_k k'$, the $2$-isotropy of
$Y_F$ is equivalent to that of $Q_{\alpha}|_{F'}$. Hence, the $2$-isotropy of
$Z_F$ is equivalent to that of $(Z_{k'}\coprod Q_{\alpha})_{F'}$. By \cite[Theorem 1.1]{VPS}, there exists a purely transcendental extension
$k''/k'$ and a pure symbol $\beta\in K^M_*(k'')/2$, such that, for any field extension $F'/k'$ and $F''=F'*_{k'}k''$, the $2$-isotropy of  
$Z_{k'}\coprod Q_{\alpha}$ is equivalent to the triviality of $\beta_{F''}$.
In particular, $\beta$ vanishes over $k''(Q_{\alpha})$, so, is divisible by $\alpha$.
And the $2$-isotropy of $Z$ is controlled by $\beta$.
 \Qed
\end{proof}

Now we can describe the numerical triviality of $BP^*$ classes modulo various powers of the augmentation ideal $I(\infty)$ in terms of non-triviality of pure symbols over the flexible closure.

\begin{thm}
 \label{ntBP-ps}
Let $p=2$, $X$ be a smooth projective variety over $k$ and $u\in BP^*(X)$.
Then there exists a chain of pure symbols $\alpha_r\in K^M_*(\wt{k})/2$
over the flexible closure of the base field, so that, 
$\alpha_1|\alpha_2|\alpha_3|\ldots$ and, for any field extension $F/k$,
$u_F$ is $BP/I(\infty)^r$-numerically trivial if and only if $\alpha_r|_{\wt{F}}\neq 0$.
\end{thm}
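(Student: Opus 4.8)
The plan is to build the symbols $\alpha_r$ recursively in $r$, keeping at each stage a smooth projective variety $Y_r$ over a purely transcendental extension of $k$ whose $2$-isotropy is controlled, in the sense of \cite[Theorem 1.1]{VPS}, by $\alpha_r$, and such that for every $F/k$ the variety $Y_r$ is $2$-anisotropic over $\wt{F}$ exactly when $u_F$ is $BP/I(\infty)^r$-numerically trivial. The raw material for a single index is the Corollary above, applied to the image $\bar{u}_r$ of $u$ in $BP^*(X)/I(\infty)^r$: when $\bar{u}_r$ is numerically trivial over $k$ it produces a purely transcendental $k_r'/k$ and a smooth projective $Y_r'/k_r'$ for which $u_E$ is $BP/I(\infty)^r$-numerically trivial if and only if $(Y_r')_{E(k_r')}$ is $p$-anisotropic, i.e.\ (as $p=2$) not $2$-isotropic; feeding $Y_r'$ into \cite[Theorem 1.1]{VPS} then yields a pure symbol in $K^M_*(\wt{k_r'})/2=K^M_*(\wt{k})/2$ controlling its $2$-isotropy, the flexible closure being unaffected by the purely transcendental extension $k_r'/k$. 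The degenerate cases are harmless: if $\bar{u}_r=0$ one takes $\alpha_r=1$, and if $\bar{u}_r$ is numerically trivial over no extension of $k$ (which, by the monotonicity below, then also holds for every larger index) one takes $\alpha_r=0$; both are trivially consistent with the required divisibility, so I may assume henceforth that each $Y_r'$ is nonempty.

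The second ingredient is the monotonicity of numerical triviality in $r$: since $I(\infty)^{r+1}\subseteq I(\infty)^r$, the theory $BP^*/I(\infty)^r$ is a quotient of $BP^*/I(\infty)^{r+1}$, whence $BP/I(\infty)^{r+1}$-numerical triviality of $u_F$ implies $BP/I(\infty)^r$-numerical triviality. Base-change $Y_r$ and $Y_{r+1}'$ to a common purely transcendental extension $k'/k$; then, combining the correspondence of the previous paragraph with this monotonicity, $2$-isotropy of $Y_r$ over a field forces $2$-isotropy of $Y_{r+1}'$ over the same field, so the disjoint union $Z:=Y_r\coprod Y_{r+1}'$ over $k'$ has exactly the same $2$-isotropy as $Y_{r+1}'$ alone; in particular $Z$ is $2$-anisotropic over $\wt{F}$ if and only if $u_F$ is $BP/I(\infty)^{r+1}$-numerically trivial.

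The recursive step is then immediate from Proposition \ref{alpha-beta}, applied over the base $k'$ to the closed embedding $Y_r\hookrightarrow Z$ of one component of the disjoint union: since $\alpha_r\in K^M_*(\wt{k})/2=K^M_*(\wt{k'})/2$ controls the $2$-isotropy of $Y_r$, the Proposition provides a pure symbol $\alpha_{r+1}\in K^M_*(\wt{k})/2$ controlling the $2$-isotropy of $Z$ together with $\alpha_r|\alpha_{r+1}$. Since $Z$ and $Y_{r+1}'$ have the same $2$-isotropy, this $\alpha_{r+1}$ satisfies $(\alpha_{r+1})_{\wt{F}}\neq 0$ if and only if $u_F$ is $BP/I(\infty)^{r+1}$-numerically trivial, for every $F/k$. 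Setting $Y_{r+1}:=Z$ closes the recursion, and the base case $r=1$ is \cite[Theorem 1.1]{VPS} applied directly to $Y_1'$.

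The single point that takes real care — past the bookkeeping with flexible closures and the trivial degenerate symbols $0$ and $1$ — is the passage to the disjoint union $Z=Y_r\coprod Y_{r+1}'$: it must simultaneously detect $BP/I(\infty)^{r+1}$-numerical triviality through its $2$-isotropy and contain $Y_r$, whose isotropy is governed by $\alpha_r$, as a closed subvariety, and this is precisely what the monotonicity of numerical triviality in $r$ makes possible. Once it is in place Proposition \ref{alpha-beta} carries out the rest and hands over the divisibility $\alpha_r|\alpha_{r+1}$ for free, so that the whole argument is really an assembly of Theorem \ref{nt-an} and the Corollary above, \cite[Theorem 1.1]{VPS}, and Proposition \ref{alpha-beta}.
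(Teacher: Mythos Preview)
Your proof is correct, but it takes a genuinely different route from the paper's argument.

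The paper applies Theorem~\ref{nt-an} \emph{once}, with $Q^*=BP^*/I(\infty)^n$ for the maximal $n$ such that $u$ is $BP/I(\infty)^n$-numerically trivial, and then exploits the ``moreover'' clause of that theorem: the single variety $Y$ so produced already controls $BP/I(\infty)^r$-numerical triviality for every $r\leq n$, in the sense that this is equivalent to the $\CH/2^r$-anisotropy of $Y$. The chain of closed embeddings is then the tower of iterated symmetric squares $Y\hookrightarrow S^2(Y)\hookrightarrow S^2(S^2(Y))\hookrightarrow\ldots$, whose $2$-anisotropy at level $r-1$ is, by Proposition~\ref{sym-powers}, exactly the $2^r$-anisotropy of $Y$; Proposition~\ref{alpha-beta} is then fed this tower. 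You instead invoke the Corollary separately for each $r$, obtaining independent varieties $Y'_r$, and manufacture the chain of closed embeddings via disjoint unions $Y_r\hookrightarrow Y_r\coprod Y'_{r+1}$, using the monotonicity of numerical triviality in $r$ to guarantee that the larger variety has the same $2$-isotropy as $Y'_{r+1}$ alone. Your approach is more modular---it never needs the ``moreover'' part of Theorem~\ref{nt-an} and treats each $r$ on its own---at the price of repeatedly invoking the Corollary and keeping track of an increasing tower of purely transcendental base extensions. The paper's approach is more economical (one variety, one application of Theorem~\ref{nt-an}) and makes visible that a single geometric object governs the whole tower of numerical conditions. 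A minor comment: your discussion of the degenerate case $\bar u_r=0$ is unnecessary, since the Corollary and the recursion both work perfectly well when $Y'_r$ happens to be empty; the only genuine boundary is when $\bar u_r$ fails to be numerically trivial over $k$, which you handle correctly by setting $\alpha_r=0$.
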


\begin{proof}
If $u$ is not $BP/I(\infty)^r$-numerically trivial, for any $r$, we may set: 
$\alpha_r=0$, for any $r$. If $u$ is $BP/I(\infty)^r$-numerically trivial, for every $r$, then it is $BP$-numerically trivial, and this property will stay over arbitrary field extensions (by transfer arguments), so we may set: $\alpha_r=1=\{\}$, for all $r$. So, we may assume that there exists $n\in\nn$, such that $u$ is
$BP/I(\infty)^r$-numerically trivial, for $r\leq n$, and is not numerically trivial,
for $r>n$.

Let $Q^*=BP^*/I(\infty)^n$ and $Y$ be the variety from Theorem \ref{nt-an}
which controls the $Q$-numerical triviality of $u$. Then it will also control
the $BP/I(\infty)^r$-numerical triviality of it, for $r\leq n$, since 
$I(\infty)^n\subset I(\infty)^r\subset BP$. 
More precisely, the $BP/I(\infty)^r$-numerical triviality of $u$ is equivalent to the
$BP/I(\infty)^r$-anisotropy of $Y$, or, which is the same, the $\CH/2^r$-anisotropy of it.
We have closed embeddings of projective varieties
$$
Y\row S^2(Y)\row S^2(S^2(Y))\row\ldots,
$$
where the $2$-anisotropy of $S^2(\ldots (S^2(Y)))$ ($i$-times) is equivalent to
the $BP/I(\infty)^{i+1}$-numerical triviality of $u$, by 
Proposition \ref{sym-powers}. 
Applying the Proposition \ref{alpha-beta} inductively, we get the chain of pure symbols $\alpha_1|\alpha_2|\ldots|\alpha_n\in K^M_*(\wt{k})/2$ controlling the $BP/I(\infty)^r$-numerical triviality of $u$, for $1\leq r\leq n$. It remains to set
$\alpha_r=0$, for $r>n$.
 \Qed
\end{proof}

\begin{rem}
The same holds for $u\in BP^*_{I(\infty)}=\operatornamewithlimits{lim}_rBP^*/I(\infty)^r$ - the completion of the $BP^*$-theory at the ideal $I(\infty)$.
 \Red
\end{rem}

\end{document}